\journal{Journal of \LaTeX\ Templates}
\tikzstyle{vertex}=[circle, draw, inner sep=0pt, minimum size=6pt]
\newcommand{\vertex}{\node[vertex]}
\newtheorem{theorem}{Theorem}[section]
\newtheorem{lemma}[theorem]{Lemma}
\begin{document}

\begin{frontmatter}

\title{Cayley graphs with metric dimension two - A characterization}
\author{Ebrahim Vatandoost\footnote{\it{Email: vatandoost@sci.ikiu.ac.ir}, Corresponding author}}
\author{Ali Behtoei\footnote{\it{Email: a.behtoei@sci.ikiu.ac.ir}}}
\author{Yasser Golkhandy Pour\footnote{\it {Email: y.golkhandypour@edu.ikiu.ac.ir}}}
\address{Department of Mathematical Sciences, Imam Khomeini International University, Qazvin, Iran}

\begin{abstract}
Let $\Gamma$ be a graph on $n$ vertices. A subset $W$ of $V(\Gamma)$ is called a resolving set when for each $u, v\in V(\Gamma)$ there exists $w\in W$ such that $\partial(u,w)\neq \partial(v, w)$. The metric dimension of $\Gamma$ is the minimum cardinality among resolving sets of $\Gamma$ and is denoted by $dim(\Gamma)$.
This  parameter has many applications in  chemistry, in  the  navigation  of  robots  in  networks  and 
in the problems  of  pattern  recognition  and  image 
processing  some of which involve the use of hierarchical data structures.
In this paper, we study the metric dimension of Cayley graphs. Specially, we present a complete characterization of Cayley graphs on Abelian groups whose metric dimension is two.
\end{abstract}

\begin{keyword}
\texttt{metric dimension}\sep \texttt{resolving set} \sep \texttt{Cayley graph}.
\MSC[2010] 05C12\sep 05C25 \sep 05C75.
\end{keyword}

\end{frontmatter}


\section{Introduction}
 Let $\Gamma$ be a (connected) graph with vertex set $V$ and edge set $E$. For each pair of vertices $u, v\in V$, the distance between $u$ and $v$ is the length of a shortest path between them and is denoted by $\partial(u,v)$. For convenient, we write $u\sim v$ when $\partial(u,v)=1$ and $u\nsim v$ otherwise. The neighbourhood of $u$ is  $N(u)=\{v\in V:~u\sim v\}$ and the diameter of $\Gamma$, denoted by $diam(\Gamma)$, is  $max\{\partial(u,v): u,v\in V\}$.
Given an ordered set $W=\{w_1, w_2, \ldots, w_k\}$ of vertices in $\Gamma$, the {\it metric representation} of  $u$ with respect to $W$ is the $k$-vector
  $r(u|W) = (\partial(u, w_1), \partial(u, w_2), \ldots, \partial(u, w_k))$.
If distinct vertices of $\Gamma$ have distinct metric representations, then $W$ is called a {\it resolving set} for $\Gamma$, see \cite{4} and \cite{5}. The {\it metric dimension} of $\Gamma$, $dim(\Gamma)$, is the minimum cardinality amoung resolving sets of $\Gamma$. 
The fractional metric dimension of permutation graphs is considered in \cite{Sinica-fractional}. In \cite{Sinica-GPetersen} the metric dimension of some family of generalized Petersen graphs are determined. Specially, it is shown that  each graph of the family of generalized Petersen graphs $P(n,4)$
has constant metric dimension.
For more results in this subject and related subjects see \cite{9}, \cite{Sinica-Strong} and \cite{6}.
One of the most interesting parts is to find some family of graphs that has constant metric dimension. In this regard, many families are studied and characterized. It is well known that paths has metric dimension one, cycles has metric dimension two and each complete graph on $n$ vertices has metric dimension $n-1$.
Caceres $et~al.$ in \cite{1} compute the metric dimension of fan $F_n$ and  the following theorem about the prism graph $P_m\times C_n$.
\begin{theorem}\cite{1}\label{11}
 For each $m$-vertex path  $P_m$ and $n$-vertex cycle $C_n$ we have
\begin{equation*}
dim(P_m\times C_n)=
      \begin{cases}
       2 &  n\equiv 1~(\!\!\!\!\!\!\mod 2),  \\
       3 &  n\equiv 0~ (\!\!\!\!\!\!\mod 2), m\geq 2.
      \end{cases}
\end{equation*}
\end{theorem}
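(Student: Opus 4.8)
The plan is to prove the metric dimension formula for the prism graph $P_m \times C_n$ by exhibiting an explicit resolving set of the claimed size and then showing that no smaller set resolves the graph. First I would fix coordinates: label each vertex as a pair $(i,j)$ with $i \in \{1,\dots,m\}$ indexing the path layer and $j \in \mathbb{Z}_n$ indexing the cycle position, so that $(i,j) \sim (i',j')$ precisely when either $i=i'$ and $j-j' \equiv \pm 1 \pmod n$, or $j=j'$ and $|i-i'|=1$. The key distance formula is that $\partial\big((i,j),(i',j')\big) = |i-i'| + d_{C_n}(j,j')$, where $d_{C_n}(j,j') = \min\{|j-j'|, n-|j-j'|\}$ is the cycle distance; this additive (product-metric) structure is what makes the whole computation tractable, and I would establish it first, since every representation vector decomposes into a ``path part'' and a ``cycle part.''

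Next I would handle the lower bound, which shows $\dim \geq 2$ always and $\dim \geq 3$ in the even case. The bound $\dim \geq 2$ is immediate because a single vertex cannot resolve a graph that is not a path. For the sharper bound when $n$ is even, the obstruction comes from antipodal symmetry on the cycle: when $n$ is even, the cycle $C_n$ has an automorphism (reflection through an axis) fixing two vertices and swapping their neighbours symmetrically, and for a resolving set of size two this forces a pair of distinct vertices to share a representation. I would make this precise by supposing $W=\{w_1,w_2\}$ with $w_t=(a_t,b_t)$ and using the additive distance formula to show that the cycle coordinates can always be ``folded'' so that two vertices symmetric about the line through $b_1$ and $b_2$ are unresolved; because $n$ is even this reflection is a genuine nontrivial isometry of $C_n$, whereas for $n$ odd the reflection axis passes through a vertex and a midpoint of an edge, breaking the symmetry just enough to allow two vertices to resolve.

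For the upper bound I would construct the resolving sets directly. In the odd case I expect the two generators to be $w_1=(1,0)$ and $w_2=(1,1)$ (or a similarly placed adjacent pair on one end layer); using the additive formula, the pair $\big(\partial((i,j),w_1),\partial((i,j),w_2)\big)$ recovers $i$ from the common path contribution and recovers $j$ because on an odd cycle the two cycle-distances $d_{C_n}(j,0)$ and $d_{C_n}(j,1)$ jointly determine $j$ uniquely. In the even case one extra vertex is needed to break the antipodal tie, so I would take $w_1=(1,0)$, $w_2=(1,1)$, and a third vertex such as $w_3=(m,0)$ on the opposite path layer, and verify that the third coordinate distinguishes the pairs left ambiguous by the first two.

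The hard part will be the even-case lower bound: showing that \emph{no} two-element set resolves $P_m \times C_n$ when $n$ is even. The difficulty is that the argument must rule out all possible placements of $w_1,w_2$ simultaneously, not just a convenient one, so I would argue structurally by reducing any candidate pair to its cycle coordinates $b_1,b_2$ and exhibiting, for each configuration, an explicit unresolved pair of vertices obtained by reflecting across the axis determined by $b_1,b_2$; the evenness of $n$ guarantees this reflection produces a distinct vertex with identical cycle-distances to both $b_1$ and $b_2$, while the freedom in the path coordinate lets me match the path parts as well. Carefully enumerating these configurations and confirming the reflected pair is genuinely distinct is where the routine-looking case analysis actually carries the weight of the proof.
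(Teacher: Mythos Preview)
The paper does not prove this theorem at all: it is quoted from Caceres \emph{et al.}\ \cite{1} and used as a black box, so there is no ``paper's own proof'' to compare against. Your outline is therefore being judged on its own merits, and it has two concrete gaps.

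First, the resolving set you propose for the odd case is wrong. Take $m=2$, $n=5$ and $W=\{(1,0),(1,1)\}$. Then $(2,0)$ has representation $(1,2)$ (via $|2-1|+0$ and $|2-1|+1$), and $(1,4)$ also has representation $(1,2)$ (via $0+1$ and $0+2$), so $W$ does not resolve $P_2\times C_5$. The phenomenon persists for all $m\ge 2$: placing the two landmarks at adjacent cycle positions on the same layer lets a one-step shift in the path coordinate compensate for a one-step shift in the cycle coordinate. A set that does work for odd $n=2k+1$ is $\{(1,0),(1,k)\}$; with that choice the difference $d_{C_n}(j,0)-d_{C_n}(j,k)$ runs through all of $\{-k,\dots,k\}$ bijectively, so it recovers $j$, and then $i$ follows.

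Second, your lower-bound argument for even $n$ does not go through as stated. You claim that for any placement of $b_1,b_2$ on $C_n$ there is a reflection of $C_n$ fixing both, so that a reflected pair is unresolved. But a reflection of an even cycle fixes either two antipodal vertices or no vertices (only edge-midpoints); in particular, if $b_1,b_2$ are neither equal nor antipodal there is \emph{no} reflection fixing both, and indeed $\{0,1\}$ already resolves $C_4$. So the cycle projection $\{b_1,b_2\}$ can be a genuine resolving set of $C_n$, and your ``cycle-only'' symmetry cannot produce the unresolved pair. The real obstruction for even $n$ must combine the cycle and path coordinates simultaneously (for instance, exploiting that the antipodal vertex $b_1+n/2$ and its two neighbours interact badly with the extra path layer), and your sketch does not yet supply that combined argument.
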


Also, for the $M\ddot{o}bius~ Ladder$ graphs the following result is obtained. 
\begin{theorem}\cite{7}\label{21}
Let $M_n$ be a Mobius Ladder graph and $n\geq 8$ be an even integer. Then  we have $dim(M_n)=3$ for $n\equiv 2 ~(\!\!\!\!\mod 8)$ and $3\leq dim(M_n)\leq 4$ otherwise.
\end{theorem}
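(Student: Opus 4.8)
The plan is to realize $M_n$ as the circulant Cayley graph $\mathrm{Cay}(\mathbb{Z}_n,\{\pm 1,m\})$ with $m=n/2$, so that left translation makes it vertex‑transitive and every distance is governed by a difference of vertices. First I would derive a closed distance formula. Since $2m\equiv 0$, every geodesic uses at most one rung edge, which yields $\partial(0,v)=\min\{c(v),\,1+c(v-m)\}$, where $c(x)$ is the distance in the underlying cycle $C_n$; reducing to a representative $d\in\{0,1,\dots,m\}$ this collapses to the folded tent $f(d):=\partial(0,d)=\min\{d,\ m-d+1\}$. This piecewise‑linear function, together with vertex‑transitivity and the reflection $\rho:x\mapsto -x$ fixing $0$, is the only structural input the rest of the argument needs.

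Next I would prove the lower bound $dim(M_n)\ge 3$ by showing that no two vertices resolve. Translating one landmark to $0$, the second is some $t$ with $1\le t\le m$. Because $\rho$ fixes $0$, every pair $\{v,-v\}$ is unresolved by $0$, and it is resolved by $t$ exactly when $\partial(t,v)\neq\partial(t,-v)$, i.e. when $f(v-t)\neq f(v+t)$ (using that $f$ is even). Hence it suffices to exhibit, for every admissible $t$, a vertex $v\notin\{0,m\}$ with $f(v-t)=f(v+t)$; since $f$ attains each value on a symmetric level set of size $2$ or $4$, I would select two level‑set elements differing by $2t$ whose midpoint avoids $0$ and $m$, falling back on a direct equidistant pair in the few residual configurations. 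This reduces to a short case analysis on the parity of $m$ and the magnitude of $t$.

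For the general upper bound $dim(M_n)\le 4$, valid for all even $n\ge 8$, I would fix a spread‑out quadruple of landmarks (for instance $\{0,1,m-1,m\}$) and verify that $v\mapsto r(v\mid W)$ is injective by reading each coordinate off the distance formula; four coordinates comfortably separate the at most four vertices lying in any common sphere around $0$. To obtain the sharp value $dim(M_n)=3$ when $n\equiv 2\pmod 8$, I would exploit that this congruence forces $m\equiv 1\pmod 4$, so that $m$ is odd and $(m+1)/2$ is an integer of controlled (odd) parity, and I would propose an explicit triple $W=\{w_1,w_2,w_3\}$ whose three distance coordinates, evaluated through the folded tents $x\mapsto f(x-w_i)$, are pairwise distinct on all $n$ vertices precisely because the fold points of the three tents stay apart under this residue.

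I expect the main obstacle to be the verification in the $n\equiv 2\pmod 8$ case. The only place a size‑$3$ representation can collide is where two landmarks' folded branches meet, so the heart of the proof is checking that $n\equiv 2\pmod 8$ is exactly the divisibility condition keeping those meeting points separated for every pair of landmarks simultaneously. By comparison, the lower‑bound case analysis is routine but tedious, and the general $\le 4$ bound is essentially bookkeeping; the delicate arithmetic lives entirely in aligning the three tents for the sharp bound, which is presumably why the remaining residue classes are only shown to satisfy $3\le dim(M_n)\le 4$ rather than pinned to an exact value.
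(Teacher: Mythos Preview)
The paper does not contain a proof of this theorem: it is quoted from reference~\cite{7} in the introduction, with no accompanying argument, and is used only as background motivation. Consequently there is no ``paper's own proof'' to compare your proposal against.

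That said, your plan is a reasonable outline of how such a result is typically established. Realizing $M_n$ as $\mathrm{Cay}(\mathbb{Z}_n,\{\pm 1,n/2\})$ and exploiting the reflection $x\mapsto -x$ to rule out two-element resolving sets is the natural route to the lower bound, and a spread quadruple such as $\{0,1,m-1,m\}$ is the standard candidate for the general upper bound. The one place your sketch remains genuinely incomplete is the sharp case $n\equiv 2\pmod 8$: you assert that you ``would propose an explicit triple $W=\{w_1,w_2,w_3\}$'' but never name it, and the entire content of that case lies in exhibiting such a triple and verifying injectivity of the representation map. Without a concrete choice and a check against the folded-tent formula, that part of the argument is a statement of intent rather than a proof.
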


Let $G$ be a group and $S$ be a subset of $G$ which is closed under taking inverse and does not contain the identity element $e$. The {\it Cayley graph} $Cay(G,S)$ is a graph with vertex set $G$ and edge set
  $\{uv: vu^{-1}\in S\}$. Cayley graphs are regular and vertex transitive.
 In \cite{2} the metric dimension of a family of $3$-regular Cayley graphs is computed.
\begin{theorem}\label{12}\cite{2}
Let $G=\langle g\rangle$ be a cyclic group of order $n$ and $S=\{g, g^{-1}, g^{n/2}\}$. Then
\begin{equation*}\label{13}
dim(Cay(G,S))=
   \begin{cases}
     3 &  n\equiv 0 ~(\!\!\!\!\!\!\mod 4), \\
     4 &  n\equiv 2 ~(\!\!\!\!\!\!\mod 4).
   \end{cases}
\end{equation*}
\end{theorem}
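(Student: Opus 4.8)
The plan is to treat the two congruence classes in parallel, exploiting the circulant structure of $\Gamma=Cay(G,S)$. Identifying $G$ with $\{0,1,\dots,n-1\}$ and writing $m=n/2$, the vertex $g^{k}$ is adjacent to $g^{k\pm 1}$ and to its antipode $g^{k+m}$. Since $\Gamma$ is vertex-transitive I may assume every resolving set under consideration contains the identity $e$, and I would first pin down the distance function from $e$. Because the chord $g^{m}$ is an involution, a geodesic uses it at most once, which gives the closed form $\partial(e,g^{k})=\min\{c(k),\,1+c(k+m)\}=\min\{c(k),\,m+1-c(k)\}$, where $c(k)=\min\{k,n-k\}$ is the cycle distance and I use $c(k+m)=m-c(k)$. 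The map $a\mapsto m+1-a$ thus \emph{folds} the possible values of $c$, so two vertices are equidistant from $e$ exactly when their $c$-values agree or are exchanged by this fold. The decisive difference between the two cases is already visible here: the fold has an integer fixed point $a^{*}=(m+1)/2$ precisely when $m$ is odd (equivalently $n\equiv 2\pmod 4$, equivalently $\Gamma$ is bipartite), producing a distinguished antipodal pair $g^{a^{*}},g^{-a^{*}}$ at maximum distance that will resist resolution.

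For the upper bounds I would exhibit explicit resolving sets and verify them through the distance formula. Starting from $W_{0}=\{e,g\}$, a short computation shows that the only pairs it fails to separate are the band pairs $\{g^{x},g^{x+m-1}\}$ with $x$ in a block of consecutive positions; adjoining the antipode $g^{m}$ separates every such pair when $m$ is even, so $\{e,g,g^{m}\}$ is resolving and $dim(\Gamma)\le 3$ for $n\equiv 0\pmod 4$. When $m$ is odd the same three landmarks separate all band pairs except the single antipodal pair $\{g^{a^{*}},g^{-a^{*}}\}$ (note $-a^{*}\equiv a^{*}+m-1$, so it is a band pair); adjoining $g^{a^{*}}$ as a fourth landmark distinguishes this pair, since one member then has distance $0$ to it and the other does not. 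This gives $dim(\Gamma)\le 4$ for $n\equiv 2\pmod 4$. In each case the verification is a finite check organized by the position of a vertex relative to the landmarks, using the piecewise description of $c$.

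For the lower bounds I would first show $dim(\Gamma)\ge 3$ uniformly. Given a two-element set, transitivity lets me take it to be $\{e,g^{t}\}$. If $t=m$ then both landmarks are fixed by the reflection $g^{x}\mapsto g^{-x}$, so every pair $\{g^{x},g^{-x}\}$ with $x\notin\{0,m\}$ is unresolved. If $t\neq m$ (reduced to $1\le t\le m-1$ by that reflection), I would produce two vertices of a band pair that are simultaneously equidistant from $e$ and from $g^{t}$: the set of $x$ for which $g^{x}$ and $g^{x+m-1}$ are equidistant from $e$ is an interval of $m$ consecutive residues, hence contains two elements differing by $t$, and shifting one of them by $t$ turns "equidistant from $e$" into "equidistant from $g^{t}$." Either way the two-element set fails.

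The crux, and the step I expect to be the main obstacle, is the lower bound $dim(\Gamma)\ge 4$ when $m$ is odd: I must rule out \emph{every} three-element set. After reducing to $W=\{e,g^{s},g^{t}\}$, the strategy is to track which band pairs $\{g^{x},g^{x+m-1}\}$ each landmark can separate — the distance formula shows a single landmark separates such a pair only when $x$ avoids a forbidden residue set — and to show that the antipodal pair $\{g^{a^{*}},g^{-a^{*}}\}$ together with the size-four spheres about $e$ impose separating requirements on $g^{s},g^{t}$ whose common solution set is empty, so two extra landmarks never suffice. Here the integrality of $a^{*}=(m+1)/2$ (the bipartiteness forced by $m$ odd) is exactly what creates the extra fixed pair that cannot be absorbed, in contrast to the even case where three landmarks already work. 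Since this is a two-parameter case analysis over $(s,t)$ rather than a single symmetry argument, most of the labor lies here; the natural way to control it is to phrase "which pairs a landmark separates" as a congruence condition on $s$ and $t$ modulo $n$ and check that the required intersection of separating sets is empty.
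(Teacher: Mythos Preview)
The paper does not contain a proof of this statement: the theorem is quoted from another source (note the citation attached directly to the theorem header), and within the paper it is invoked only as a known black-box result inside the proofs of Theorem~2.2 and Theorem~3.1. There is therefore no argument in the paper to compare your attempt against.

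As an independent sketch your plan is sound in outline. The distance formula $\partial(e,g^{k})=\min\{c(k),\,m+1-c(k)\}$ with $c(k)=\min\{k,n-k\}$ is correct and cleanly isolates the parity of $m=n/2$ as the governing parameter; your proposed resolving sets for the upper bounds are the natural candidates; and the $dim\ge 3$ argument via the reflection $x\mapsto -x$ (for $t=m$) together with a band-pair shift count (for $t\neq m$) is workable once the ``interval of $m$ consecutive residues'' claim is verified from the piecewise form of the distance. The part that remains genuinely incomplete is precisely the one you yourself flag: the lower bound $dim\ge 4$ when $m$ is odd is at present only a strategy. Ruling out every three-element landmark set in the M\"obius ladder for $n\equiv 2\pmod 4$ requires an actual two-parameter elimination over $(s,t)$, and your description (``phrase which pairs a landmark separates as a congruence condition and check the intersection is empty'') is a plan of attack, not an argument. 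That step would need to be written out in full before the proof could stand.
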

In this paper, we provide a complete characterization of Cayley graphs on Abelian groups whose metric dimension is the constant value two. 
For this reason, we use two following useful results frequently.
 \begin{theorem}\cite{4}\label{02}
  Let $\Gamma$ be a graph and $\{u, v, w\}\subseteq V$ such that $u\sim v$ and $\partial(u,w)=d$. Then $\partial(v,w)\in\{d-1, d,d+1\}$.
\end{theorem}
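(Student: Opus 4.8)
The plan is to rely on the fact that the shortest-path distance $\partial$ is a metric on $V$; in particular, being the length of a shortest path in a connected graph, it satisfies the triangle inequality. First I would record the two instances of the triangle inequality that connect $\partial(v,w)$ and $\partial(u,w)$ by routing through the other vertex of the edge $uv$:
\begin{equation*}
\partial(v,w) \le \partial(v,u) + \partial(u,w) \qquad\text{and}\qquad \partial(u,w) \le \partial(u,v) + \partial(v,w).
\end{equation*}

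Next I would substitute the hypotheses. Since $u\sim v$ we have $\partial(u,v)=\partial(v,u)=1$, and by assumption $\partial(u,w)=d$. The first inequality then yields $\partial(v,w)\le d+1$, while rearranging the second yields $\partial(v,w)\ge d-1$. Together these give $d-1\le \partial(v,w)\le d+1$.

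Finally, because graph distances are nonnegative integers, the only integers lying in the closed interval $[d-1,\,d+1]$ are $d-1$, $d$, and $d+1$. Hence $\partial(v,w)\in\{d-1,d,d+1\}$, which is exactly the claim.

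I do not expect a genuine obstacle, since the whole argument is the triangle inequality applied in both directions together with integrality of distances. The only step meriting a word of care is the lower bound $\partial(v,w)\ge d-1$: this is not the triangle inequality in its most familiar upper-bound form but its rearrangement, and writing the inequality with $w$ approached through $v$ (as in the second displayed inequality) makes the rearrangement transparent.
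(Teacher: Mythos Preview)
Your argument is correct: the two-sided triangle inequality together with $\partial(u,v)=1$ gives $|\partial(v,w)-d|\le 1$, and integrality of graph distances forces $\partial(v,w)\in\{d-1,d,d+1\}$. The paper does not supply its own proof of this statement---it is quoted from \cite{4} as a background fact---so there is nothing to compare against; your proof is exactly the standard one-line justification one would expect for this result.
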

\begin{theorem}\cite{10}\label{14}
Let $\Gamma$ be a graph with $dim(\Gamma)=2$ and $W=\{u, v\}$ be a resolving set for it. Then
\begin{itemize}
  \item[i)] there exists a unique shortest path between $u$ and $v$,
  \item[ii)] the degree of $u$ and the degree of $v$ are at most  three.
\end{itemize}
\end{theorem}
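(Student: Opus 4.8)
The plan is to exploit that, because $W=\{u,v\}$ resolves $\Gamma$, the two coordinates of each metric representation are exactly the distances to $u$ and to $v$, so any two distinct vertices must differ in at least one of these two coordinates. Throughout I would write $d=\partial(u,v)$ and work directly with representations of the form $(\partial(x,u),\partial(x,v))$.

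For part (i), I would first record the forced representation of vertices lying on a shortest $u$-$v$ path: if $x$ is the $i$-th vertex of such a path, then the triangle inequality gives $d\le \partial(u,x)+\partial(x,v)\le i+(d-i)=d$, so equality holds throughout and $r(x\,|\,W)=(i,d-i)$. Now assume for contradiction that two distinct shortest $u$-$v$ paths exist. Being of common length $d$ with the same endpoints $u$ and $v$, their vertex sequences must disagree at some interior index $j$ with $1\le j\le d-1$, producing two distinct vertices $x_j\neq y_j$ that both carry the representation $(j,d-j)$. This contradicts the resolving property of $W$, and hence the shortest $u$-$v$ path is unique.

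For part (ii), it suffices to bound $\deg(u)$, the bound on $\deg(v)$ following by the symmetry of the two roles. For any neighbour $w$ of $u$ we have $\partial(u,w)=1$, and applying Theorem \ref{02} to the adjacent pair $u\sim w$ together with $\partial(u,v)=d$ gives $\partial(w,v)\in\{d-1,d,d+1\}$. Thus every neighbour of $u$ has a representation of the form $(1,\partial(w,v))$ whose second coordinate lies in a three-element set; since distinct neighbours are distinct vertices, their representations must be pairwise distinct, and as they already agree in the first coordinate their second coordinates must be pairwise distinct. With only three admissible values available, at most three neighbours can occur, so $\deg(u)\le 3$.

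I expect no step to be a genuine obstacle: the whole argument is a direct combination of the resolving condition with Theorem \ref{02}. The one place demanding care is the uniqueness claim in (i), where one must justify that two distinct shortest paths are compelled to disagree at an interior index while simultaneously sharing the forced representation $(j,d-j)$ there, since it is precisely this collision of representations that violates the resolving property.
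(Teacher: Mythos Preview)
Your argument is correct: both parts follow cleanly from the resolving condition together with Theorem~\ref{02}, and the only delicate point---that two distinct shortest $u$--$v$ paths must diverge at some interior index, yielding two distinct vertices with the same forced representation $(j,d-j)$---is handled properly.

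Note, however, that the paper does not supply its own proof of this statement: Theorem~\ref{14} is quoted from reference~\cite{10} and used as a black box throughout. There is therefore nothing in the present paper to compare your proof against. Your proof is the standard one and is essentially what appears in the original source.
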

\section{\bf Preliminaries}
In this section, we provide some useful result's which will be applied  in the next section.
In \cite{103} the metric dimension of $k$-regular bipartite graphs for $k=n-1$ and $k=n-2$ is determined. 
We obtain a sharp lower bound for the metric dimension of $3$-regular bipartite graphs.
\begin{lemma}\label{22}
Let $\Gamma$ be a $3$-regular bipartite graph on $n$ vertices. Then $dim(\Gamma)\geq 3$.
\end{lemma}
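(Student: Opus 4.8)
The plan is to argue by contradiction: suppose $\Gamma$ is a $3$-regular bipartite graph on $n$ vertices with $dim(\Gamma)=2$, and let $W=\{u,v\}$ be a resolving set. By Theorem~\ref{14}, there is a unique shortest path between $u$ and $v$, and each of $u,v$ has degree at most three. Since $\Gamma$ is $3$-regular this forces $\deg(u)=\deg(v)=3$, so the structural constraints of Theorem~\ref{14} are being met with equality, and I would exploit this tightness. The key combinatorial idea is a counting argument on the metric representations: every vertex $x$ is assigned the pair $r(x|W)=(\partial(x,u),\partial(x,v))$, and these pairs must all be distinct.

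The main step is to bound how many vertices can share a given value of $\partial(\cdot,u)$, and similarly for $v$, using Theorem~\ref{02} together with $3$-regularity. First I would observe that because $u$ has exactly three neighbours, there are at most three vertices at distance $1$ from $u$; more importantly, as one moves outward along distance classes from $u$, the bipartite structure partitions $V$ into the even and odd distance layers. For a vertex $x$ at distance $d$ from $u$, its three neighbours lie at distances in $\{d-1,d+1\}$ (distance $d$ is impossible in a bipartite graph, since same-distance-from-$u$ vertices lie in the same part and hence are non-adjacent). This bipartite refinement of Theorem~\ref{02} is the crucial leverage: it rules out the ``$d$'' case, so every edge changes the $u$-coordinate by exactly one, and likewise the $v$-coordinate.

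From here I would count the vertices by their representation pairs. Partition $V$ according to the parity classes determined by $u$: the two coordinates $\partial(x,u)$ and $\partial(x,v)$ have a fixed parity relationship on each part (their sum is congruent to $\partial(u,v)$ modulo $2$ on one part and differs on the other). The strategy is to show that the number of distinct admissible pairs $(\partial(x,u),\partial(x,v))$ is strictly smaller than $n$, forcing two vertices to collide and contradicting that $W$ resolves. To make this precise I would use the uniqueness of the shortest $u$--$v$ path (part (i) of Theorem~\ref{14}) to pin down that the pair $(\partial(x,u),\partial(x,v))$ satisfies $|\partial(x,u)-\partial(x,v)|\leq \partial(u,v)$ and $\partial(x,u)+\partial(x,v)\geq \partial(u,v)$, restricting the pairs to a bounded lattice region whose admissible (parity-consistent) points I can enumerate against the degree-$3$ expansion rate of the graph.

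The hard part will be turning the local degree constraint into a genuine global deficit in the number of available representations; a naive count of lattice points in the triangle/diamond region may not obviously beat $n$. I expect the resolution to come from the interplay of the two unique-geodesic conditions with $3$-regularity: once $u$ spends one of its three edges heading toward $v$ along the unique geodesic, only two edges remain to expand each distance layer, which sharply limits how the distance classes can grow and hence how many distinct pairs are realizable. The cleanest route is probably to exhibit, directly from these constraints, two distinct vertices with identical representations—most likely two of the neighbours of $u$ or two vertices symmetrically placed off the unique geodesic—rather than via a pure counting inequality, since the forced symmetry of a bipartite $3$-regular graph around the geodesic tends to manufacture such a twin pair. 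I would therefore first attempt the explicit twin construction and fall back on the lattice-point count only if the construction leaves gaps.
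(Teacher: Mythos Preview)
Your proposal contains the decisive observation but never cashes it in, and instead wanders off into a counting programme that you yourself concede may not close. You note (correctly) that in a bipartite graph a neighbour of a vertex at distance $d$ from $v$ must lie at distance $d-1$ or $d+1$ from $v$, never $d$. That single sentence, applied to the three neighbours $u_1,u_2,u_3$ of $u$, already finishes the proof by pigeonhole: all three have $\partial(u_i,u)=1$, while $\partial(u_i,v)\in\{d-1,d+1\}$ takes only two values, so two of the $u_i$ share the same representation and $W$ does not resolve. No lattice-point count, no global expansion estimate, and no use of Theorem~\ref{14} is needed.

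The paper's proof is exactly this argument, phrased contrapositively: since the three neighbours of $u$ must receive three distinct $v$-distances from $\{d-1,d,d+1\}$, one of them, say $u_2$, satisfies $\partial(u_2,v)=d$; then a shortest $u$--$v$ path, a shortest $u_2$--$v$ path, and the edge $uu_2$ form a closed walk of odd length $2d+1$, hence an odd cycle, contradicting bipartiteness. So the paper and the ``twin among neighbours of $u$'' idea you mention at the end are the same thought; you simply stopped one step short of stating it. The elaborate plan involving parity classes, triangle inequalities on pairs, and growth rates of distance layers is unnecessary and, as you suspected, would be hard to make rigorous without reverting to the local pigeonhole anyway.
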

\begin{proof}
Since $\Gamma$ is not a path, $dim(\Gamma)\geq 2$. Suppose on the contrary that $dim(\Gamma)=2$ and let $W=\{u, v\}$ be a resolving set for $\Gamma$. 
Also, assume that $\partial(u, v)=d$ and $N(u)=\{u_1, u_2, u_3\}$. By Theorem \ref{02}, $\partial(u_i, v)\in \{d-1, d, d+1\}$ for each
 $1\leq i\leq 3$. If there exist $i\neq j$ such that $\partial(u_i, v)=\partial(u_j, v)$, then $r(u_i|W)=r(u_j|W)$, which is a contradiction. 
 Without loss of generality, assume that $\partial(u_1, v)=d-1$, $\partial(u_2, v)=d$ and $\partial(u_3, v)=d+1$.
Let $\sigma_1$ be a shortest path between $u$ and $v$, and $\sigma_2$ be  a shortest path between $u_2$ and $v$.
Now two paths $\sigma_1$ and $\sigma_2$ using the edge $uu_2$ create an odd closed walk in $\Gamma$ which contains an odd cycle, a contradiction (see page~24 of \text{\cite{8}}). For sharpness, consider the hyper cube $Q_3$.
\end{proof}
\begin{theorem}\label{01} Let $G\ncong S_3$ be a group of order $n\geq 3$ and  $S\subset G$ be an inverse-closed generating subset of $G$ such that
 $e\notin S$ and $dim(Cay(G,S))=2$. Also, suppose that $Cay(G,S)$ is not a cycle and that $W$ is an optimal resolving set for $Cay(G,S)$. Then we have $W\cap S=\emptyset$.
\end{theorem}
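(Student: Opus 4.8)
The plan is to fix the valency of the graph first, then normalise the resolving set so that it contains the identity, and finally analyse the edge joining the two resolving vertices. Since $S$ generates $G$, the graph $\Gamma=Cay(G,S)$ is connected and regular of valency $|S|$. As $dim(\Gamma)=2$ with resolving set $W$, Theorem \ref{14}(ii) forces the two vertices of $W$ to have degree at most three, and by regularity every vertex does, so $|S|\le 3$. A single inverse-closed generator would make $\langle S\rangle$ have order at most two, contradicting $n\ge 3$; and $|S|=2$ makes $\Gamma$ a connected $2$-regular graph, hence a cycle, which is excluded. Therefore $|S|=3$ and $\Gamma$ is cubic.

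Because the right translations $x\mapsto xg$ are automorphisms of $\Gamma$, I would replace $W$ by a translate and assume $e\in W$, say $W=\{e,w\}$; since $e\notin S$, the claim $W\cap S=\emptyset$ becomes $w\notin S$. Assume for contradiction that $w\in S$, so that $e\sim w$ and $\partial(e,w)=1$, and write $N(e)=S=\{w,p,q\}$. By Theorem \ref{02} each of $\partial(p,w),\partial(q,w)$ lies in $\{0,1,2\}$, and the value $0$ is impossible; since $r(p|W)=(1,\partial(p,w))$ and $r(q|W)=(1,\partial(q,w))$ must be distinct, necessarily $\{\partial(p,w),\partial(q,w)\}=\{1,2\}$. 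Hence $e$ and $w$ have exactly one common neighbour $s$, and the edge $ew$ lies in a unique triangle $\{e,w,s\}$; in group terms $w,s\in S$ and $sw^{-1}\in S$. (Had there been no common neighbour, both $p,q$ would sit at distance two from $w$ and share the representation $(1,2)$, already contradicting resolvability.)

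The heart of the argument is to show that this triangle, together with $|S|=3$, $S=S^{-1}$ and resolvability, is almost impossible. I would record the triangle as the set identity $S=\{w,s,sw^{-1}\}$, allowing coincidences among these elements, and solve $S=S^{-1}$ for the admissible connection sets. The analysis should collapse to two outcomes. Either the three elements of $S$ are all the non-identity elements of a Klein four-group or of a cyclic group of order four, so that $\Gamma=K_4$; but $dim(K_4)=3$, so $K_4$ admits no resolving set of size two, a contradiction. Or $w$ has order three with $s=w^{-1}$ and the remaining generator is an involution, which forces $\Gamma\cong C_3\times K_2$, the triangular prism on a group of order six; this is the exceptional configuration that the hypothesis $G\ncong S_3$ is designed to remove. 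In every surviving branch one exhibits two vertices with equal representation, contradicting that $W$ resolves, so $w\notin S$ and $W\cap S=\emptyset$.

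The step I expect to be the main obstacle is precisely this last one: proving that the innocuous-looking triangle through $e$ cannot be completed to any cubic graph other than $K_4$ or the prism. A triangle at $e$ could a priori sit inside many non-isomorphic cubic graphs, so to close the argument one must couple the algebraic condition $S=S^{-1}$ with a distance count in the breadth-first layers from $e$ — each layer holding at most three vertices, since an admissible representation $(i,j)$ satisfies $|i-j|\le 1$ — and thereby locate the forced pair of equal representations that destroys resolvability.
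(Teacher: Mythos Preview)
Your opening reductions (forcing $|S|=3$, translating so that $e\in W$, and reading off from $r(p\mid W)\ne r(q\mid W)$ that exactly one element of $S\setminus\{w\}$ is adjacent to $w$) are correct and match the paper's setup. The gap is in the step you yourself flag as the obstacle.

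The claim that the triangle $\{e,w,s\}$ together with $S=S^{-1}$ forces $\Gamma\cong K_4$ or $\Gamma\cong C_3\times K_2$ is false as stated. In the branch $s=w^{-1}$ with $O(w)=3$ and the third generator $u$ an involution, the group $G=\langle w,u\rangle$ can be \emph{any} finite group generated by an element of order~$3$ and an involution (i.e.\ any finite quotient of $\mathbb{Z}_3*\mathbb{Z}_2$); nothing in the connection set bounds $|G|$ by~$6$. The paper does not attempt to identify $\Gamma$ globally here. For non-abelian $G$ it instead pushes one more layer of the breadth-first tree: it examines the two neighbours $wuw$ and $w^2uw$ of the vertex $uw\in N(w)$, verifies (using $G\ncong S_3$ precisely at the step where $(uw)^2=e$ would collapse the group) that neither of them lies in $S$ nor among $\{wu,w^2u,uw^2\}$, and concludes $r(wuw\mid W)=r(w^2uw\mid W)=(3,2)$. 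That explicit pair is the contradiction you are looking for; your outline gestures at ``a distance count in the BFS layers'' but does not produce it, and the global-structure claim on which you rest instead is not true.

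A second, related problem: the hypothesis $G\ncong S_3$ does \emph{not} eliminate the triangular prism. The same graph is $Cay(\mathbb{Z}_6,\{2,3,4\})$, and there $W=\{0,2\}$ is a resolving pair with $2\in S$, so ``prism $\Rightarrow$ excluded by $G\ncong S_3$'' is not a valid terminal move. The paper disposes of the abelian order-six branch separately by invoking Theorem~\ref{12} rather than the $S_3$ exclusion; whatever one makes of that citation, your proposed route through the hypothesis $G\ncong S_3$ alone does not cover $\mathbb{Z}_6$.
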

\begin{proof}
Since $Cay(G,S)$ is vertex transitive, without loss of generality, we can assumed that $e\in W$ and  $W=\{e, w\}$ for some $w\in G$. Since $dim(Cay(G,S))=2$, Theorem \ref{14} implies that $|S|\leq 3$. Since $n>2$, we have $|S|\neq 1$.
If $|S|=2$, then $Cay(G,S)$ is isomorphic to a cycle which contradicts the assumptions. Hence $|S|=3$.
On the contrary, assume that $W\cap S\neq \emptyset$ and $S=\{u, v, w\}$.
According to the order of $w$, the proof falls into the following two cases.
\newline
{\bf Case 1.} $O(w)=2$: In this case, we first claim that $N(w)\cap S=\emptyset$. Note that $N(w)=\{e, uw, vw\}$. If $uw\in S$, then $uw=u$, $uw=v$, or $uw=w$. Since $u\neq e$ and $w\neq e$, it follows that $uw=v$. Thus, in $Cay(G,S)$, $u\sim w$ and $v\sim w$. Hence $r(u\mid W)=r(v\mid W)=(1, 1)$,
which is a contradiction.
If $vw\in S$, then similarly we can obtain a contradiction. This completes the proof of claim. Hence $\partial(e, uw)=\partial(e, vw)=2$ and so $r(uw\mid W)=r(vw\mid W)=(2, 1)$, a contradiction.
\newline
{\bf Case 2.}  $O(w)\neq 2$: Since $S=S^{-1}$ and $w^{-1}\in S$, without loss of generality, we can assume that $w^{-1}=v$ and $O(u)=2$. We claim that $N(w)\cap S=\emptyset$. Note that $N(w)=\{e, uw, w^2\}$.
At first, assume that $uw\in S$. Then $uw=u$, $uw=w$, or $uw=w^{-1}$. Since $u\neq e$ and $w\neq e$, $uw=w^{-1}$. Hence $w^2=u$. Since $O(u)=2$, $O(w)=4$. In addition, since $G=\langle S\rangle$, $G$ is isomorphic to $\mathbb{Z}_4$ (cyclic group of order four) and hence, $Cay(G,S)$ is isomorphic to $K_4$. Therefore, $dim(Cay(G,S))=3$ which is a contradiction.
Now assume that $w^2\in S$. Then $w^2=u$, $w^2=w$, or $w^2=w^{-1}$. It is clear that $w^2\neq w$. If $w^2=u$, then $O(w)=4$, and so $G$ is isomorphic to $\mathbb{Z}_4$, and so $Cay(G,S)$ is isomorphic to $K_4$. Hence $dim(Cay(G,S))=3$, which produce a contradiction.
\newline
If $w^2=w^{-1}$, then $O(w)=3$. First suppose $G$ is an Abelian group. Then $G$ is isomorphic to $\mathbb{Z}_6$, cyclic group of order six.
By Theorem \ref{12}, $dim(Cay(\mathbb{Z}_6,S))=4$, which is a contradiction.
\newline
Next assume that $G$ is a non-Abelian group.
In this case all possible neighbors of vertices $u, w, w^2$ and $uw$ are depicted in Fig. \ref{fig2}. Define $L=\{wuw, w^2uw\}$. It is claimed that $L\cap S=\emptyset$.
 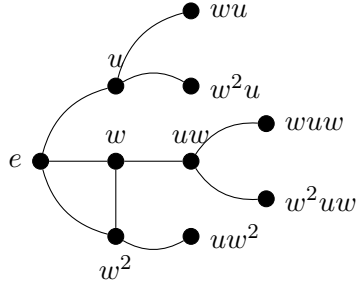
\begin{figure}[htb]
\begin{center}
\[\begin{tikzpicture}
 \vertex (a) at (1,0) [label=left:$e$][fill=black] {};
 \vertex (b) at (2,0) [label=above:$w$] [fill=black]{};
 \vertex (c) at (3,0) [label=above:$uw$] [fill=black]{};
  \vertex (d) at (4,.5) [label=right:$wuw$] [fill=black]{};
 \vertex (e) at (4, -.5) [label=right:$w^2uw$] [fill=black]{};
 \vertex (f) at (2, 1) [label=above:$u$] [fill=black]{};
 \vertex (g) at (3,2) [label=right:$wu$] [fill=black]{};
 \vertex (h) at (3, 1) [label=right:$w^2u$] [fill=black]{};
 \vertex (m) at (2, -1) [label=below:$w^2$] [fill=black]{};
 \vertex (n) at (3, -1) [label=right:$uw^2$] [fill=black]{};
 \path
(a) edge[bend left] (f)
(a) edge (b)
(a) edge[bend right] (m)
(f) edge[bend left] (g)
(f) edge[bend left] (h)
(b) edge (c)
(b) edge (m)
(m) edge[bend right] (n)
(c) edge[bend left] (d)
(c) edge[bend right] (e);
\end{tikzpicture}\]
\caption{{\footnotesize A part of $Cay(G,S)$ in that the neighborhood of letters in $S$ is depicted.}}
\label{fig2}
\end{center}
\end{figure}
\newline
It is easy to check that $wuw\neq w$ and $wuw\neq w^2$. Also if $wuw=u$, then $O(uw)=2$. Thus
$G=\langle S\rangle=\langle u,w : u^2=w^3=(wu)^2=e\rangle$, and so $G\cong S_3$, which is wrong.
\newline
On the other hand, it is clear that $w^2uw\neq w$ and $w^2uw\neq w^2$. Also if $w^2uw=u$, then $uw=wu$ and so $O(uw)=6$. Thus $G\cong \mathbb{Z}_6$ which produce a contradiction. Therefore $L\cap S=\emptyset$, as claimed.
 \newline
Now Assume that $K=\{wu, w^2u, uw^2\}$. It will be shown that $L\cap K=\emptyset$.
\newline
First let $wuw\in K$. Since $w\neq e$, $wuw\neq wu$. Hence $wuw=uw^2$ or $wuw=w^2u$. In any way, $uw=wu$ and so $G\cong \mathbb{Z}_6$ which produce a contradiction.
\newline
Next suppose that $w^2uw\in K$. Since $w\neq e$, $w^2uw\neq w^2u$. Let $w^2uw=wu$ or $w^2uw=uw^2$. Then $O(uw)=2$; and since
$G=\langle S\rangle$, $G=\langle u,w : u^2=w^3=(uw)^2=e\rangle$. Hence $G\cong S_3$, which is a contradiction.
 \newline
  Thus $L\cap S=L\cap K=\emptyset$. So $\partial(e, wuw)=\partial(e, w^2uw)=3$ and $\partial(w, wuw)=\partial(w, w^2uw)=2$. We have
  $r(wuw\mid W)=r(w^2uw\mid W)=(3,2)$, which contradicts the fact that $W$ is a resolving set for $Cay(G,S)$.
\newline
In all cases for $w$, a contradiction was produced; and it follows that $W\cap S=\emptyset$ and completed the proof.
\end{proof}
\section{\bf Cayley graphs with metric dimension two}
In this section, we present a complete characterization for Cayley graphs on Abelian groups whose metric dimension is two.
\begin{theorem}\label{25}
Let $G=\langle u\rangle$ be a cyclic group of order $n$ and $S=\{u^i, u^{-i}, u^{n/2}\}$ be a generating subset for $G$. Then $dim(Cay(G,S))=2$ if and only if $gcd(i, n/2)=1$ and $n\equiv 2(\mod 4)$.
\end{theorem}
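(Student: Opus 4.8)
The plan is to translate the problem into circulant language and reduce it to two recognizable graph families. Writing $G=\mathbb{Z}_n$ additively, $Cay(G,S)$ is the circulant in which each $x$ is adjacent to $x\pm i$ and to $x+\tfrac n2$; in particular $n$ is even. Since $\langle i,\tfrac n2\rangle=\langle\gcd(i,\tfrac n2)\rangle$ inside $\mathbb{Z}_n$, the hypothesis that $S$ generates $G$ is \emph{exactly} $\gcd(i,\tfrac n2)=1$, and this also makes the three listed elements distinct, so $Cay(G,S)$ is $3$-regular. Hence the condition $\gcd(i,\tfrac n2)=1$ in the statement is automatic, and the real content is the equivalence $dim=2\iff n\equiv2\pmod 4$. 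I would run the argument by tracking $n\bmod 4$ together with the parity of $i$, the governing quantity being $\gcd(i,n)\in\{1,2\}$.

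For the ``only if'' direction, suppose $dim(Cay(G,S))=2$ and, for contradiction, that $n\not\equiv2\pmod 4$, i.e. $n\equiv0\pmod 4$. Then $\tfrac n2$ is even, so $\gcd(i,\tfrac n2)=1$ forces $i$ odd and hence $\gcd(i,n)=1$; thus the edges $x\mapsto x\pm i$ already trace a single Hamiltonian cycle. The automorphism $x\mapsto i^{-1}x$ of $\mathbb{Z}_n$ fixes $\tfrac n2$ (because the unit $i^{-1}$ is odd) and carries $S$ onto $\{1,-1,\tfrac n2\}$, so $Cay(G,S)\cong M_n=Cay(\mathbb{Z}_n,\{1,-1,\tfrac n2\})$, the M\"obius ladder. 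Theorem \ref{12} then gives $dim(M_n)=3$, contradicting $dim=2$, and so $n\equiv2\pmod 4$.

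The heart of the ``if'' direction is to recognise $Cay(G,S)$ as a prism. Assume $n\equiv2\pmod 4$, so $m:=\tfrac n2$ is odd, and---this is the crucial point---that $i$ is even, equivalently $\gcd(i,n)=2$. Then the $\pm i$ edges preserve the parity of the label, splitting $\mathbb{Z}_n$ into the even and the odd residues, each carrying a single $m$-cycle (dividing by $2$ turns the step $i$ into a unit step of $\mathbb{Z}_m$, since $\gcd(i,m)=1$). Because $m$ is odd, $x\mapsto x+m$ flips parity, so the $\tfrac n2$ edges form a perfect matching between the two cycles, and the bookkeeping ``$ki$ is matched to $ki+m$'' shows this matching is order-preserving. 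Thus $Cay(G,S)\cong P_2\times C_m$ with $m$ odd, and Theorem \ref{11} yields $dim=2$. (For $i$ odd one is back in the Hamiltonian-cycle situation above, where $M_n$ is now $3$-regular and bipartite---the diagonal joins the two colour classes because $m$ is odd---so Lemma \ref{22} forces $dim\ge3$; this is exactly why the prism, and hence the value $2$, requires $i$ even.)

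The step I expect to carry the real difficulty is the prism identification in the previous paragraph, and within it the parity bookkeeping. Everything turns on the parity of $i$: an even $i$ makes the circulant break under $\pm i$ into two odd cycles joined by an order-preserving matching (a genuine prism, $dim=2$), whereas an odd $i$ produces one long cycle closed up with a twist (a M\"obius ladder, $dim\ge3$). Verifying that the matching induced by $x\mapsto x+m$ is untwisted---rather than merely asserting it---is the delicate point; once the isomorphism $Cay(G,S)\cong P_2\times C_m$ is secured, Theorem \ref{11}, Theorem \ref{12}, and Lemma \ref{22} close out every remaining case with no further work.
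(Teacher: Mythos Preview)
Your strategy---recognise $Cay(G,S)$ as either a M\"obius ladder or an odd prism and then invoke Theorems~\ref{11} and~\ref{12}---is exactly the paper's. But you have in fact uncovered a defect in the \emph{statement} of the theorem, and you should say so outright rather than leave it in a parenthetical. As you observe, $\gcd(i,n/2)=1$ is forced by the hypothesis that $S$ generates $G$, so the theorem as written reduces to ``$\dim=2\iff n\equiv 2\pmod 4$''. This is false: take $n=6$, $i=1$; then $\gcd(1,3)=1$ and $6\equiv 2\pmod 4$, yet $Cay(\mathbb{Z}_6,\{1,3,5\})\cong K_{3,3}$ has metric dimension $4$ by Theorem~\ref{12}. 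Your own parenthetical proves the general failure: for $n\equiv 2\pmod 4$ with $i$ odd one has $\gcd(i,n)=1$, the graph is the M\"obius ladder $M_n$, and since $n/2$ is odd $M_n$ is $3$-regular bipartite, so $\dim\ge 3$ by Lemma~\ref{22} (indeed $\dim=4$ by Theorem~\ref{12}).

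The paper's proof contains exactly this gap. In the ``if'' direction it writes ``Then $O(u^i)=n/2$'' immediately after assuming only $\gcd(i,n/2)=1$ and $n\equiv 2\pmod 4$; but $O(u^i)=n/\gcd(i,n)$, and this equals $n/2$ only when $\gcd(i,n)=2$, i.e.\ when $i$ is even---a condition absent from the hypotheses. The correct equivalence is $\dim(Cay(G,S))=2\iff \gcd(i,n)=2$ and $n\equiv 2\pmod 4$ (equivalently $\gcd(i,n/2)=1$, $i$ even, $n/2$ odd), and your prism argument together with your $i$-odd parenthetical proves precisely this corrected version. So fix your opening paragraph: do not assert that ``the real content is the equivalence $\dim=2\iff n\equiv 2\pmod 4$'', since your own final paragraph refutes it. State the corrected hypothesis up front, and present the $i$-odd case not as an aside but as the counterexample showing the theorem as printed is wrong.
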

\begin{proof}
At first, assumed that $dim(Cay(G,S))=2$. Since $S$ is a generating subset for $G$, we have $gcd(i,n)=1$ or $gcd(i,n/2)=1$. If $gcd(i,n/2)\neq 1$, then
$gcd(i,n)=1$. Hence $O(u^i)=n$ and $G=\langle u^i\rangle$. Now Theorem \ref{12} implies that $dim(Cay(G,S))\in\{3,4\}$ which is a contradiction.
Therefore $gcd(i,n/2)=1$ and so $gcd(i,n)=2$. This means that $i$ is even and $n/2$ is odd denoted it by $2k+1$. Hence $n\equiv 2 (\mod 4)$.
 \newline
Next, let $gcd(i,n/2)=1$ and $n\equiv 2(\mod 4)$. Then $O(u^i)=n/2$ which is an odd. If $H=\langle u^i\rangle$, then since $n/2$ is an odd and $O(u^{n/2})=2$, $u^{n/2}\notin H$. Hence $[G:H]=2$ and so $G=H\cup Hu^{n/2}$.
\newline
Obviously, $Cay(G,S)$ contains two disjoint cycles on $n/2$ vertices
 $e\sim u^i\sim u^{2i}\sim \ldots\sim (u^{i})^{n/2-1}\sim e$  and $u^{n/2}\sim u^{i+n/2}\sim u^{2i+n/2}\sim \ldots\sim u^{(n/2-1)i+n/2}\sim u^{n/2}$ as a subgraph.
On the other hand, for each $1\leq k\leq n/2$, $u^{ki}\sim u^{ki+n/2}$. Since $|S|=3$, $Cay(G,S)$ is isomorphic to the prism graph, $P_2\times C_{2k+1}$. See Fig. \ref{fig5}, for more details. Thus by Theorem \ref{11}, $dim(Cay(G,S))=2$.
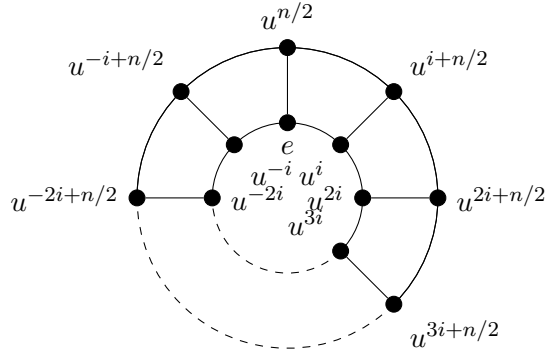
\begin{figure}[htb]
\begin{center}
\[\begin{tikzpicture}
 \vertex (a) at (90:2) [label=above:$\footnotesize{u^{n/2}}$][fill=black] {};
 \vertex (b) at (45:2) [label=above right:$\footnotesize{u^{i+n/2}}$] [fill=black]{};
 \vertex (c) at (360:2) [label=right:$\footnotesize{u^{2i+n/2}}$] [fill=black]{};
  \vertex (d) at (-45:2) [label=below right:$\footnotesize{u^{3i+n/2}}$] [fill=black]{};
 \vertex (f) at (180:2) [label=left:$\footnotesize{u^{-2i+n/2}}$] [fill=black]{};
\vertex (g) at (135:2) [label= above left:$\footnotesize{u^{-i+n/2}}$] [fill=black]{};
 \vertex (aa) at (90:1) [label=below:$\footnotesize{e}$][fill=black] {};
 \vertex (bb) at (45:1) [label=below left:$\footnotesize{u^i}$] [fill=black]{};
 \vertex (cc) at (360:1) [label=left:$\footnotesize{u^{2i}}$] [fill=black]{};
  \vertex (dd) at (-45:1) [label=above left:$\footnotesize{u^{3i}}$] [fill=black]{};
 \vertex (ff) at (180:1) [label=right:$\footnotesize{u^{-2i}}$] [fill=black]{};
\vertex (gg) at (135:1) [label=below right:$\footnotesize{u^{-i}}$] [fill=black]{};
 \path
(a) edge (aa)
(b) edge (bb)
(c) edge (cc)
(d) edge (dd)
(f) edge (ff)
(g) edge (gg);
\draw [dashed] (-45:2) arc (-45:-180:2);
\draw  (90:2) arc (90:45:2);
\draw (45:2) arc (45:0:2);
\draw  (0:2) arc (0:-45:2);
\draw (90:2) arc (90:135:2);
\draw  (135:2) arc (135:180:2);
\draw  (90:1) arc (90:45:1);
\draw  (45:1) arc (45:0:1);
\draw  (0:1) arc (0:-45:1);
\draw  (90:1) arc (90:135:1);
\draw  (135:1) arc (135:180:1);
 \draw
[dashed] (-45:1) arc (-45:-180:1);
\draw  (180:2) arc (180:-45:2);
\end{tikzpicture}\]
\caption{{\footnotesize $Cay(G,S)$ is isomorphic to the prism graph, $P_2\times C_{2k+1}$.}}
\label{fig5}
\end{center}
\end{figure}
\end{proof}
\begin{theorem}\label{24} Let $G$ be a non-cyclic Abelian group of order $n>4$; and let $S$ be a generating subset of $G$ in which $e\notin S=S^{-1}$. Then $dim(Cay(G,S))\neq 2$.
\end{theorem}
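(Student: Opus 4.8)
The plan is to exploit the $|S|$-regularity of $Cay(G,S)$ together with Theorem \ref{14} to bound $|S|$, and then to run a case analysis on the inverse-closed structure of $S$, identifying the underlying graph in each case. Since $Cay(G,S)$ is $|S|$-regular and $dim(Cay(G,S))=2$ would force every vertex to have degree at most three by Theorem \ref{14}, I would first record that $|S|\leq 3$; and since $n>4$ we also have $|S|\geq 2$.

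For $|S|=2$ I would observe that an inverse-closed set of size two is either $\{s,s^{-1}\}$ with $O(s)>2$, forcing $G=\langle s\rangle$ to be cyclic, or $\{a,b\}$ with $a,b$ two distinct involutions, forcing $G=\langle a,b\rangle\cong \mathbb{Z}_2\times\mathbb{Z}_2$ of order $4$. Both outcomes contradict the hypotheses that $G$ is non-cyclic of order $n>4$, so $|S|=2$ cannot occur.

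The substantive case is $|S|=3$. Since $S=S^{-1}$ and the non-involutions of $S$ pair off as $\{s,s^{-1}\}$, the number of involutions in $S$ is odd, hence $1$ or $3$. If $S$ consists of three involutions, then $G=\langle S\rangle$ is an elementary abelian $2$-group, and the only such group of order $n>4$ is $\mathbb{Z}_2^{3}$; the associated graph is the cube $Q_3$, which is $3$-regular and bipartite, so $dim(Cay(G,S))\geq 3$ by Lemma \ref{22}. If instead $S=\{a,s,s^{-1}\}$ with $a$ the unique involution and $O(s)=m\geq 3$, then I would argue that $a\notin\langle s\rangle$ (otherwise $a=s^{m/2}$ and $G=\langle s\rangle$ is cyclic), whence $\langle s\rangle\cap\langle a\rangle=\{e\}$ gives $G\cong\mathbb{Z}_m\times\mathbb{Z}_2$ and $Cay(G,S)$ is the prism $P_2\times C_m$.

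The key observation closing this last case is that $\mathbb{Z}_m\times\mathbb{Z}_2$ is cyclic exactly when $m$ is odd (equivalently $\mathbb{Z}_m\times\mathbb{Z}_2\cong\mathbb{Z}_{2m}$ precisely for odd $m$); since $G$ is assumed non-cyclic, $m$ must be even, and then Theorem \ref{11} yields $dim(P_2\times C_m)=3$. In every case $dim(Cay(G,S))\neq 2$, which finishes the proof. I expect the delicate point to be the completeness of the classification: verifying that the parity of the number of involutions really leaves only these two configurations when $|S|=3$, and that each configuration is matched to the correct group and graph ($Q_3$ or an even prism) without gaps, so that the non-cyclicity hypothesis is exactly what rules out the odd prism of metric dimension two produced in Theorem \ref{25}.
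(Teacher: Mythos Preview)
Your argument is correct and takes a genuinely different, more structural route than the paper. After the common reduction to $|S|=3$, the paper invokes Theorem~\ref{01} to obtain $W\cap S=\emptyset$ for a putative resolving pair $W=\{e,w\}$, and then runs a delicate metric analysis: writing $w=u^kv$ or $w=u^k$ and computing $\partial(e,w)$ to exhibit either two distinct shortest $e$--$w$ paths (contradicting Theorem~\ref{14}(i)) or an inconsistency among the distances $\partial(e,u^{k\pm1})$, $\partial(e,u^kv)$. You instead classify the \emph{group} and the \emph{graph}: with three involutions you get $G\cong\mathbb{Z}_2^3$ and $Cay(G,S)\cong Q_3\cong P_2\times C_4$, handled by Lemma~\ref{22} (or Theorem~\ref{11}); with $S=\{a,s,s^{-1}\}$ you get $G\cong\mathbb{Z}_m\times\mathbb{Z}_2$ and $Cay(G,S)\cong P_2\times C_m$, and the non-cyclicity hypothesis forces $m$ even so that Theorem~\ref{11} gives dimension~$3$. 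Your route is shorter and entirely avoids Theorem~\ref{01} and the shortest-path bookkeeping; the paper's route, on the other hand, works directly with the metric and does not need the explicit internal-direct-product identification $G\cong\langle s\rangle\times\langle a\rangle$. The parity count of involutions in $S$ and the step $a\notin\langle s\rangle\Rightarrow G\cong\mathbb{Z}_m\times\mathbb{Z}_2$ are exactly the points that make your classification complete, and they are sound as stated.
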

\begin{proof} Suppose on the contrary that $dim(Cay(G,S))=2$. By Theorem \ref{14}-(ii), $|S|\leq 3$. $G$ is not cyclic and hence $|S|>1$. 
If $|S|=2$ and $S=\{u, v\}$, then $S=S^{-1}$ implies that $O(u)=O(v)=2$ or $u=v^{-1}$. Since $S$ is a generating set for $G$ and $G$ is a non-cyclic group, $u\neq v^{-1}$. Thus, since $G=\langle S\rangle$ is non-cyclic, $G$ is isomorphic to $\mathbb{Z}_2\times\mathbb{Z}_2$ which contradicts the fact that $n>4$.
\newline
 Now we can assume that $|S|=3$. By vertex transitivity of Cayley graphs, let $W=\{e, w\}$ be a resolving set for $Cay(G,S)$. By Theorem \ref{01}, $W\cap S=\emptyset$; and we can assume that $S=\{u, v, z\}$.
\newline
First, let $O(u)=O(v)=O(z)=2$. Since $G$ is an Abelian group, $n\in\{4, 8\}$ and this using the condition $n>4$, implies that $n=8$. 
In this case, $G$ is isomorphic to
$\mathbb{Z}_2\times\mathbb{Z}_4$ or $\mathbb{Z}_2\times\mathbb{Z}_2\times\mathbb{Z}_2$. In each case, $Cay(G,S)$ is isomorphic to $P_2\times C_4$. See Fig. \ref{fig8} for more details.
\begin{figure}[htb]
\begin{center}
\[\begin{tikzpicture}
 \vertex (a) at (0,2) [label=above:$e$][fill=black] {};
 \vertex (b) at (0,-2) [label=below:\footnotesize{$uv$}] [fill=black]{};
 \vertex (c) at (2,0) [label=right:$u$] [fill=black]{};
  \vertex (d) at (-2,0) [label=left:$v$] [fill=black]{};
 \vertex (aa) at (0,1) [label=below:$z$][fill=black] {};
 \vertex (bb) at (0,-1) [label=above:\footnotesize{$uvz$}] [fill=black]{};
 \vertex (cc) at (1,0) [label=left:\footnotesize{$uz$}] [fill=black]{};
  \vertex (dd) at (-1,0) [label=right:\footnotesize{$zv$}] [fill=black]{};
  \path
(a) edge (aa)
(b) edge (bb)
(c) edge (cc)
(d) edge (dd);
\draw (0,0) circle (1) (0,0) circle (2);
\end{tikzpicture}\]
\caption{{\footnotesize It was assumed $G$ is an Abelian group of order $8$ and $S=\{u, v, z\}$ such that $O(u)=O(v)=O(z)=2$.}}
\label{fig8}
\end{center}
\end{figure}
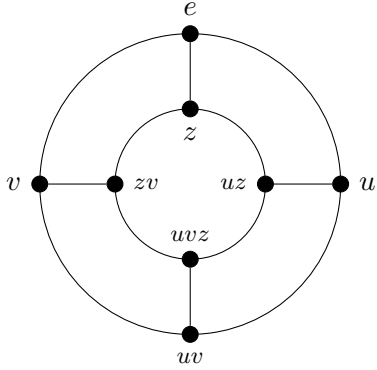
 Hence by Theorem \ref{11}, we have $dim(Cay(G,S))=3$, which is a contradiction. Thus $S=\{u, u^{-1}, v\}$ such that $z=u^{-1}$, $O(u)=t\geq 2$ and $O(v)=2$.
 Since $S$ is a generating subset of $G$ and $w\notin S$, we have $w=u^kv$ for some
  $1\leq k\leq t-1$; or $w=u^{k}$ for some $2\leq k\leq t-2$. The following cases will be considered.
 \newline
 {\bf Case 1.} $w=u^kv$ for some $1\leq k\leq t-1$.
  \newline
  Since $u^{-\ell}=u^{n-\ell}$ for any integer $\ell$, by renaming $u=u^{-1}$ if it is necessary, it can be assumed $k\leq n/2$ is a positive integer. Certainly, $e\sim u\sim u^2\sim\ldots\sim u^k\sim u^kv$ is a path of length $k+1$ from $e$ to $u^kv$, and so $\partial(e, u^kv)\leq k+1$. If $\partial(e, u^kv)\leq k$, then (by definition of Cayley graphs) there exist a shorter path whose vertices are created by linear combination of powers of $u$ and $v$, or of $u^{-1}$ and $v$.
 \newline
 First, let the shorter path be created by combination of powers of $u$ and $v$. Since $G$ is Abelian there is a positive integer $\ell<k$ such that $u^kv=u^{\ell}v$. Hence $u^k=u^{\ell}$ which is not possible.
 \newline
 Next, suppose the shorter path be created by combination of powers of $u^{-1}$ and $v$. Then there is a positive integer
 $\ell<k$ such that $u^kv=u^{-\ell}v$. Hence $u^k=u^{-\ell}$. Since $u^{-\ell}=u^{n-\ell}$, we have $k=n-\ell\gneqq n-n/2$, which is a contradiction.
 \newline
 Therefore $\partial(e, u^kv)=k+1$. Now, we obtain two distinct path of length $k+1$ from e to $u^kv$ as depicted in Fig. \ref{fig6} which contradicts Theorem \ref{14}.
\begin{figure}[htb]
\begin{center}
\[\begin{tikzpicture}
 \vertex (1) at (-3,0) [label=left:$e$] [fill=black]{};
 \vertex (2) at (-2,.75) [label=above:$u$] [fill=black]{};
 \vertex (3) at (-1,.95) [label=above:$u^2$] [fill=black]{};
 \vertex (4) at (2,.75)  [label=above:$u^k$][fill=black]{};
 \vertex (5) at (2,-.75)  [label=below:$u^{k-1}v$][fill=black]{};
 \vertex (6) at (-1,-.95) [label=below:$uv$] [fill=black]{};
 \vertex (7) at (-2,-.75) [label=below:$v$] [fill=black]{};
 \vertex (8) at (3,0) [label=right:$u^kv$] [fill=black]{};
 \path
 (3)edge[bend left=10, dashed, black, ultra thick] (4)
  (6)edge[bend right=10, dashed, black, ultra thick] (5)
;
\draw[-latex] (0, 0) ellipse [x radius = 3cm, y radius = 1cm,
  start angle = 30, end angle = 150];

\end{tikzpicture}\]
\caption{{\footnotesize Two distinct paths of length $k+1$ between $e$ and $u^kv$.}}
\label{fig6}
\end{center}
\end{figure}
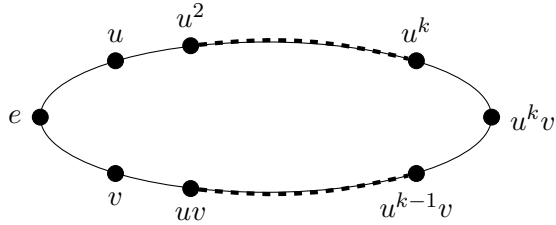
\newline
 {\bf Case 2.} $w=u^k$ for some $2\leq k\leq t-2$.
  \newline
  Since $u^{-\ell}=u^{t-\ell}$ for any integer $\ell$, by renaming $u=u^{-1}$ if it is necessary, it can be assumed $k\leq t/2$ is a positive integer. It is obvious that
 $e\sim u\sim u^2\sim \ldots\sim u^{k-1}\sim u^{k}$ is a path of length $k$ from $e$ to $u^k$,
  which is denoted by $P$. Hence $\partial(e,u^k)\leq k$.
 \newline
 If $\partial(e,u^k)<k$, then there exists a path from $e$ to $u^k$ with length shorter than $k$ which must be created by combination of $u$ and $v$. Assume the new path is separated from $P$ in $i$-th, and joint to $P$ in $j$-th vertex again, for some $0\leq i<j\leq k$. See Fig. \ref{fig7}, for more details.
 \begin{figure}[htb]
\begin{center}
\[\begin{tikzpicture}
 \vertex (1) at (0,0) [label=left:$e$] [fill=black]{};
 \vertex (2) at (1,0) [label=above:$u$] [fill=black]{};
 \vertex (5) at (2,0) [label=above:$u^i$] [fill=black]{};
 \vertex (6) at (3,0)  [fill=black]{};
 \vertex (7) at (4,0)  [fill=black]{};
 \vertex (8) at (5,0) [label=above:$u^j$] [fill=black]{};
 \vertex (11) at (6,0) [label=right:$u^k$] [fill=black]{};
 \vertex (12) at (2.5,-.5) [label=below:$u^iv$] [fill=black]{};
 \vertex (15) at (4.5,-.5) [label=below:$u^jv$] [fill=black]{};
 \path
 (1) edge (2)
 (2) edge[dashed] (5)
 (5) edge (6)
 (7) edge[dashed] (6)
 (7) edge (8)
 (8) edge[dashed] (11)
 (15) edge[bend right=20] (8)
 (5) edge[bend right=20] (12)
 (15) edge[bend left=20, dashed] (12);
\end{tikzpicture}\]
\caption{{\footnotesize }}
\label{fig7}
\end{center}
\end{figure}\\
\newline
Clearly, the new path has two vertices $u^iv$ and $u^jv$ more than $P$, which produce a contradiction. Therefore $\partial(e,u^k)=k$. Consequently, it can be assumed that $k\neq n/2$. If not, then we can obtain two distinct path of length $n/2$ from $e$ to $u^k$, which contradicts Theorem \ref{24}-(i).
\newline
By using the structure of $P$, we have $N(u^k)=\{u^{k-1}, u^{k+1}, u^kv\}$ and $\partial(e, u^{k-1})=k-1$.
In addition, If $k+1>t/2$, then $k\geq t/2$ which contradicts the fact $k<t/2$. Hence $k+1\leq t/2$ and so $\partial(e, u^{k+1})=k+1$.
\newline
Finally, By Theorem \ref{02}, $\partial(e, u^kv)=k$. On the other hand, by similar proof of Case $1$, we have $\partial(e, u^kv)=k+1$, which produce a contradiction. Therefore $W$ can not be a resolving set for $Cay(G,S)$ and so $dim(Cay(G,S))\neq 2$. 
\end{proof}
Now, we are ready to establish our main Theorem.
\begin{theorem}\label{99}
  Let $G$ be an Abelian group of order $n>4$ and $S\subset G$, where $e\notin S=S^{-1}$. Then $dim(Cay(G,S))=2$ if and only if $G=\langle u\rangle$ is cyclic and $S=\{u^i, u^{-i}, u^{n/2}\}$ in which $gcd(i,n/2)=1$ and $n\equiv 2(\mod 4)$.
\end{theorem}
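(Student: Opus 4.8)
The statement is a synthesis of the two structural theorems just proved, so the plan is to split on whether $G$ is cyclic and then pin down the admissible shape of $S$. The sufficiency (``if'') direction is immediate: if $G=\langle u\rangle$ is cyclic and $S=\{u^i,u^{-i},u^{n/2}\}$ with $\gcd(i,n/2)=1$ and $n\equiv 2\pmod 4$, then Theorem~\ref{25} gives $dim(Cay(G,S))=2$ outright. So the real work lies in the necessity (``only if'') direction, and I would organize it as a reduction followed by a structure analysis.

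For necessity, I would assume $dim(Cay(G,S))=2$. Since metric dimension is defined only for connected graphs, $S$ generates $G$, and Theorem~\ref{14}(ii) forces $|S|\le 3$. As $n>4$, a single involution cannot generate $G$, so $|S|\in\{2,3\}$. The first move is to dispose of the non-cyclic possibility: if $G$ were a non-cyclic Abelian group of order $n>4$, then Theorem~\ref{24} would give $dim(Cay(G,S))\neq 2$, a contradiction. Hence $G=\langle u\rangle$ must be cyclic.

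It then remains to determine $S$ in the cyclic case. Using $S=S^{-1}$, I would argue that when $|S|=3$ the inversion map $s\mapsto s^{-1}$ is an involution on the three-element set $S$ and therefore has at least one fixed point; since $e\notin S$ and a cyclic group has at most one element of order two, exactly one element of $S$ is self-inverse, namely the unique involution $u^{n/2}$, and the remaining two form an inverse pair $u^i,u^{-i}$. This simultaneously forces $n$ to be even and fixes $S=\{u^i,u^{-i},u^{n/2}\}$, after which the necessity half of Theorem~\ref{25} delivers $\gcd(i,n/2)=1$ and $n\equiv 2\pmod 4$, completing the equivalence.

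The step I expect to be most delicate is the remaining subcase $|S|=2$: here $S=\{u^i,u^{-i}\}$ with $u^i\neq u^{-i}$ and $\gcd(i,n)=1$, so $Cay(G,S)$ is a cycle $C_n$, which already has metric dimension two without being of the claimed cubic form. This degenerate, classically known case must be excluded (or explicitly absorbed) to keep the stated equivalence exact, precisely the same cycle exception that was isolated in the hypotheses of Theorem~\ref{01}. Apart from this bookkeeping, the argument is a clean assembly of Theorems~\ref{24} and~\ref{25} together with the inverse-closure counting above, and I do not anticipate needing any new distance estimate.
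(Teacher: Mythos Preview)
Your overall strategy coincides with the paper's: sufficiency comes directly from Theorem~\ref{25}, and for necessity one invokes Theorem~\ref{24} to rule out non-cyclic $G$ and then appeals to Theorem~\ref{25} in the cyclic case. The paper's proof is in fact terser than your outline: after establishing that $G$ is cyclic it simply says ``the results will be obtained from Theorem~\ref{25}'' without ever arguing that $S$ must take the form $\{u^i,u^{-i},u^{n/2}\}$. Your involution-counting step (the map $s\mapsto s^{-1}$ on a three-element inverse-closed set has at least one fixed point, and a cyclic group has at most one involution, forcing that fixed point to be $u^{n/2}$ and the remaining pair to be $\{u^i,u^{-i}\}$) is a genuine addition that fills a gap the paper leaves implicit.

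Your concern about $|S|=2$ is well founded and is \emph{not} addressed anywhere in the paper. If $G=\langle u\rangle$ and $S=\{u^i,u^{-i}\}$ with $\gcd(i,n)=1$, then $Cay(G,S)\cong C_n$ has metric dimension two, yet $S$ is not of the stated cubic form (for odd $n$ the element $u^{n/2}$ does not even exist). So the equivalence as written is literally false: cycles are counterexamples. The paper's proof passes over this silently, and Theorem~\ref{25} already presupposes the three-element shape of $S$, so no existing lemma disposes of the case. The cleanest repair is to add ``and $Cay(G,S)$ is not a cycle'' (equivalently $|S|=3$) to the hypotheses, exactly the exclusion you observed was built into Theorem~\ref{01}; your diagnosis here is correct, not a gap in your argument.
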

\begin{proof}
if $G=\langle u\rangle$; and $S=\{u^i, u^{-i}, u^{n/2}\}$ in which $gcd(i,n/2)=1$ and $n\equiv 2(\mod 4)$, then by Theorem \ref{25}, $dim(Cay(G,S))=2$.
\newline
On the other hand, suppose that $dim(Cay(G,S))=2$. If $G$ is a non-cyclic Abelian group, then by Theorem \ref{24}, $dim(Cay(G,S))\neq 2$ which is wrong. Hence $G$ is a cyclic group; and the results will be obtained from Theorem \ref{25}.
\end{proof}
\section*{References}

\bibliography{mybibfile}
\end{document}